\newtheorem{thm}{Theorem}
\newtheorem{cnj}[thm]{Conjecture}
\newtheorem{cor}[thm]{Corollary}
\newtheorem{exa}{Example}
\newtheorem*{fct*}{Fact}
\newtheorem{qst}[thm]{Question}
\newtheorem{res}{Result}
\def\e{{\epsilon}}
\def\g{{\gamma}}
\def\D{{\Delta}}
\def\Th{{\Theta}}
\def\ui{{\"{I}}}
\def\cG{{\cal G}}
\def\sc{{\sf c}}
\def\zZ{{\mathbb Z}} 
\def\capt{{\sf capt}}
\def\deg{{\sf deg}}
\def\diam{{\sf diam}}
\def\dist{{\sf dist}}
\def\gir{{\sf gir}}
\def\np{{\sf NP}}
\def\rad{{\sf rad}}
\def\hps{{\hat{\pi}^*}}
\def\p{{\pi}}
\def\pc{{\p^\sc}}
\def\ps{{\p^*}}
\definecolor{brwn}{RGB}{140, 70, 20}
\definecolor{gren}{RGB}{  0,140, 10}
\definecolor{mnt}{RGB}{167,230,215}
\newcommand{\nc}[1]{\textcolor{blue}{\sf{#1}}}
\newcommand{\jf}[1]{\textcolor{gren}{\sf{#1}}}
\newcommand{\gh}[1]{\textcolor{brwn}{\sf{#1}}}
\newcommand{\up}[1]{\textcolor{red}{\sf{#1}}}
\begin{document}

\selectlanguage{english}

\title{Cops and Robbers Pebbling in Graphs}

\author{
Nancy E.~Clarke\thanks{
Department of Mathematics and Statistics, 
Acadia University, Wolfville, NS, Canada.
Research support by NSERC grant \#2020-06528.
}
\and
Joshua Forkin\thanks{
Department of Mathematics and Applied Mathematics,
Virginia Commonwealth University, USA
}
\and
Glenn Hurlbert\footnotemark[2]
}

\date{}

\maketitle

\onehalfspacing

\centerline{\it Dedicated to Oleksandr Stanzhytsky, and others like him}
\centerline{\it who are doing less mathematics than usual at this time.}

\begin{abstract}
Here we merge the two fields of Cops and Robbers and Graph Pebbling to introduce the new topic of Cops and Robbers Pebbling.
Both paradigms can be described by moving tokens (the cops) along the edges of a graph to capture a special token (the robber).
In Cops and Robbers, all tokens move freely, whereas, in Graph Pebbling, some of the chasing tokens disappear with movement while the robber is stationary.
In Cops and Robbers Pebbling, some of the chasing tokens (cops) disappear with movement, while the robber moves freely.
We define the cop pebbling number of a graph to be the minimum number of cops necessary to capture the robber in this context, and present upper and lower bounds and exact values, some involving various domination parameters, for an array of graph classes, including paths, cycles, trees, chordal graphs, high girth graphs, and cop-win graphs, as well as graph products.
Furthermore we show that the analogous inequality for Graham's Pebbling Conjecture fails for cop pebbling and posit a conjecture along the lines of Meyniel's Cops and Robbers Conjecture that may hold for cop pebbling.
We also offer several new problems.
\end{abstract}
\begin{quote}
{\bf Keywords:} Cops and Robbers, Graph Pebbling, dominating set
\\
{\bf MSC2020: 05C57, 91A43, 05C69, 90B10}
\end{quote}

\newpage


\section{Introduction}
\label{s:Intro}

There are numerous versions of moving tokens in a graph for various purposes.
Two popular versions are called {\it Cops and Robbers} and {\it Graph Pebbling}.
In both cases we have tokens of one type (C) attempting to capture a token of another type (R), and all token movements occur on the edges of a graph.
In the former instance, all tokens move freely, whereas in the latter instance, type R tokens are stationary and type C movements come at a cost.
In this paper we merge these two subjects to create {\it Cops and Robbers Pebbling}, wherein type R tokens move freely and type C tokens move at a cost.

We define these three paradigms more specifically in Subsection \ref{ss:Defs} below.
The new graph invariant we define to study in this paper is the cop pebbling number of a graph, denoted $\pc(G)$; roughly, this equals the minimum number of pebble-cops necessary to capture the robber in the Cops and Robbers Pebbling paradigm.
In Subsections \ref{ss:Cops}--\ref{ss:OptPebb} we present known results about Cops and Robbers, dominating sets, and optimal pebbling, respectively, that will be used in the sequel.
We record in Subsections \ref{ss:Lower}--\ref{ss:Exact} new theorems on lower bounds, upper bounds, and exact answers for $\pc(G)$, respectively, for a range of graph families, including paths, cycles, trees, chordal graphs, high girth graphs, and cop-win graphs, as well as, in some cases, for all graphs.
The reader already familiar with the prior subjects can skip ahead to Section \ref{s:Main} where we state and prove our results of the cop pebbling numbers of graphs.
For example, Theorem \ref{t:treebound} proves that the cop pebbling number of $n$-vertex trees is at most $\lceil 2n/3\rceil$, which is tight for paths (and cycles), and Theorem \ref{t:girth} provides an upper bound involving a domination parameter, as a function of girth. 
Section \ref{s:Cart} contains theorems for Cartesian products of graphs.
For example, Theorems \ref{t:CopGrids} and \ref{t:Cubes} give upper and lower bounds on the cop pebbling numbers of grids and cubes, respectively, while Theorem \ref{t:Ktprod} proves that $\pc(G\Box K_t)\le t\pc(G)$ for all $G$.
Furthermore, Theorems \ref{t:GrahamCounter} and \ref{t:WheelProds} show that the analogous inequality from Graham's Pebbling Conjecture fails for cop pebbling.
We finish in Section \ref{s:Open} with some natural questions left open from this work, including a version of Meyniel's Cops and Robbers Conjecture that may hold in the Cops and Robbers Pebbling world, namely that $\pc(G)\le 2n/3+o(n)$ for all $G$.

\subsection{Definitions}
\label{ss:Defs}

In order to describe cop pebbling in Subsection \ref{s:CopPebbling}, we need first to share the definitions and notations from each of the other two areas, as well as what we use from basic graph theory.

We use several standard notations in graph theory, including $V(G)$ for the set of vertices of a graph $G$ (with $n(G)=|V(G)|$), $E(G)$ for its edge set, $\rad(G)$ for its radius, $\diam(G)$ for its diameter, and $\gir(G)$ for its girth, as well as $\deg(v)$ for the degree of a vertex, and $\dist(u,v)$ for the distance between vertices $u$ and $v$.
For a vertex $v$ in a graph $G$, we use the notations $N_d(v)=\{u\mid \dist(u,v)< d\}$ and $N_d[v]=\{u\mid \dist(u,v)\le d\}$.
If $d=1$ we drop the subscript; additionally, we write $N[S]=\cup_{v\in S}N[v]$ for a set of vertices $S$.
We often use $T$ to denote a tree, and set $P_n$, $C_n$, and $K_n$ to be the path, cycle, and complete graph on $n$ vertices, respectively.
(For convenience, we define $C_2=P_2$.)

For graphs $G$ and $H$ we define the {\it Cartesian product} $G\Box H$ with vertex set $V(G)\times V(H)$ and edges $(u,v)(w,x)$ if $uw\in E(G)$ and $v=x$ or if $u=w$ and $vx\in E(H)$.
The $d$-{\it dimensional cube} $Q^d$ is defined by $Q^1=P_2$ and $Q^d=Q^{d-1}\Box Q_1$ for $d>1$.

\subsubsection{Graph Pebbling}

A {\it configuration} $C$ of pebbles on a graph $G$ is a function from the vertices of $G$ to the non-negative integers.
Its {\it size} equals $|C|=\sum_{v\in G}C(v)$.
For adjacent vertices $u$ and $v$ with $C(u)\ge 2$, a {\it pebbling step} from $u$ to $v$ removes two pebbles from $u$ and adds one pebble to $v$, while, when $C(u)\ge 1$, a {\it free step} from $u$ to $v$ removes one pebble from $u$ and adds one pebble to $v$.
In the context of moving pebbles, we use the word {\it move} to mean {\it move via pebbling steps}.

The {\it pebbling number} of a graph $G$, denoted $\p(G)$, is the minimum number $m$ so that, from \underline{any} configuration of size $m$, one can move a pebble to any specified {\it target} vertex.
The {\it optimal pebbling number} of a graph $G$, denoted $\ps(G)$, is the minimum number $m$ so that, from \underline{some} configuration of size $m$, one can move a pebble to any specified target vertex. 
We note that in the definitions of the pebbling number and the optimal pebbling number, free moves are not allowed.

\subsubsection{Cops and Robbers}

In Cops and Robbers, the cops are the pebbles, the robber is the target, and the robber is allowed to move.
The Cops and Robbers alternate making moves in {\it turns}.
At each turn, any positive number of cops make one free step, then the robber chooses to make a free step or not.
In Graph Pebbling literature, the activity of moving a pebble to a target is called {\it solving} or {\it reaching} the target; here we use the analogous Cops and Robbers terminology of {\it capturing} the robber.

The {\it cop number} $c(G)$ is defined as the minimum number $m$ so that, from \underline{some} configuration of $m$ cops, it is possible to capture any robber via free steps.
If the cops catch the robber on their $t^{\rm th}$ turn, then we say that the {\it length} of the game is $t$; if the robber wins then the length is infinite.
When the number of cops used is $c(G)$, the {\it capture time} of $G$, denoted $\capt(G)$, is defined to be the length of the game on $G$ when both cops and robbers play optimally.
That is, it equals the minimum (over all cop strategies) of the maximum (over all robber strategies) of the length of the game with $c(G)$ cops on $G$.


\subsubsection{Cop Pebbling}
\label{s:CopPebbling}

The {\it cop pebbling number} $\pc(G)$ is defined as the minimum number $m$ so that, from \underline{some} configuration of $m$ cops, it is possible to capture any robber via pebbling steps.
For example, $c(G)\le n(G)$, since the placement of one cop on each vertex has already caught any robber.
We call an instance of a graph $G$, configuration $C$, and robber vertex $v$ a {\it game}, and say that the cops win the game if they can capture the robber; else the robber wins.
Note that, since we lose a cop with each pebbling step, the cops must catch the robber within at most $|C|-1$ turns to win the game.

We may assume that all graphs are simple.
Because games on $K_1$ are trivial, we will assume that all graph components have at least two vertices.
Additionally, because of the following fact, we will restrict our attention in this paper to connected graphs.

\begin{fct*}
\label{f:sum}
If $G$ has connected components $G_1,\ldots,G_k$ then $\pc(G)=\sum_{i=1}^k\pc(G_i)$.
\end{fct*}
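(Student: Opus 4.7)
The plan is to prove the equality by verifying both inequalities separately, each of which follows from the basic observation that pebbling steps travel along edges and therefore can never transfer a cop between different connected components.

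For the upper bound $\pc(G) \le \sum_{i=1}^k \pc(G_i)$, I would take an optimal winning cop-pebbling configuration $C_i$ on each component $G_i$ (so $|C_i| = \pc(G_i)$) and form their disjoint union $C$ on $G$. Given any robber placement on $G$, the robber lies in some component $G_j$, and the cops from $C_j$ alone can capture him by playing the winning strategy guaranteed by the definition of $\pc(G_j)$, while the remaining cops in other components stay idle. This shows $\pc(G) \le |C| = \sum_i \pc(G_i)$.

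For the lower bound $\pc(G) \ge \sum_{i=1}^k \pc(G_i)$, I would start with any winning configuration $C$ on $G$ of size $\pc(G)$, and let $m_i$ denote the number of cops $C$ places on component $G_i$, so $\pc(G) = \sum_i m_i$. For each fixed $j$, consider the robber placed on an adversarial vertex of $G_j$. Since no pebbling step can cross between components, cops in components $G_i$ with $i \ne j$ can never influence the game, and the restriction $C|_{G_j}$ must by itself be a winning configuration against any robber on $G_j$. By definition this forces $m_j \ge \pc(G_j)$, and summing over $j$ gives the desired bound.

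There is essentially no obstacle here; the only thing to be careful about is the logical structure on the lower-bound side, namely that the robber vertex is chosen adversarially \emph{after} the cop configuration, so the cops restricted to a single component $G_j$ genuinely need to handle every possible robber starting vertex in $G_j$. Once that point is recognised, the two inequalities combine to give $\pc(G) = \sum_{i=1}^k \pc(G_i)$.
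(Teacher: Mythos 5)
Your proof is correct. Note that the paper states this as a Fact without supplying any proof at all (it is treated as immediate and used only to justify restricting attention to connected graphs), so there is nothing to compare against; your two-inequality argument --- disjoint union of optimal configurations for $\pc(G)\le\sum_i\pc(G_i)$, and restriction of a winning configuration to each component for the reverse, both resting on the observation that neither cops nor the robber can cross between components --- is exactly the standard justification one would write down, and your care about the adversarial order of quantifiers in the lower bound is the right point to flag.
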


A set $S\subseteq V(G)$ is a {\it distance-$d$ dominating set} if $\cup_{v\in S}N_d[v]=V(G)$.
We denote by $\g_d(G)$ the size of the smallest distance-$d$ dominating set.

\subsection{Cop Results}
\label{ss:Cops}

Here we list the results on Cops and Robbers that will be used to prove our theorems on cop pebbling.
A graph $G$ is {\it cop-win} if $c(G)=1$.
A vertex $u$ in $G$ is called a {\it corner} if there is a vertex $v\not= u$ such that $N[u]\subseteq N[v]$.
We say that $G$ is {\it dismantlable} if either $G$ is a single vertex or there is a corner $u$ such that $G-u$ is dismantlable.
Note that chordal graphs are dismantlable.

\begin{res}
\label{r:copwincharacterization}
\cite{NowaWink} A graph is cop-win if and only if it is dismantlable.
\end{res}

\begin{res}
\label{r:chordalcapture}
\cite{BoGoHaKr}
If $G$ is a chordal graph with radius $r$, then $\capt(G) \leq r$.
\end{res}

This bound is tight. For example, $P_5$ has both radius 2 and $\capt(G) = 2$.

\begin{res}
\label{r:Cayley}
\cite{FranklCayley}
If $G$ is a $d$-regular Cayley graph then $c=c(G)\le \lceil\frac{d+1}{2}\rceil$, and $\capt_c(G) \leq |V(G)|\lceil\frac{d+1}{2}\rceil$.
\end{res}

Result \ref{r:Cayley} yields the following result as a corollary.

\begin{res}
  \label{r:Cube}
\cite{Alspach}
For $d\ge 1$, the $d$-dimensional cube $Q^d$ satisfies $c(Q^d)=\lceil\frac{d+1}{2}\rceil$.
\end{res}

Bonato et al. \cite{BoKiPrGo} found the correct order of magnitude for the capture time of the cube.

\begin{res}
  \label{r:CubeCapt}
\cite{BoKiPrGo}
With $c=c(Q^d)$ we have $\capt_c(Q^d)=\Th(d\lg d)$.
\end{res}

\subsection{Dominating Set Results}
\label{ss:Dom}

In this section we list results on domination that will be used to prove cop pebbling theorems.
The definition of a dominating set immediately yields the following result.

\begin{res}
\label{r:DegDom}
If $G$ is a graph with $n$ vertices and maximum degree $\Delta$ then $\g(G) \ge \frac{n}{\Delta+1}$.
\end{res}

%

\begin{res}
\label{r:almost}
\cite{BonKemPra}
Almost all cop-win graphs $G$ have $\g(G)=1$.
\end{res}

\begin{res}
\label{r:GridDom}
\cite{GoPiRaTh}
If $G=P_k\Box P_m$ with $16 \leq k \leq m$, then $\g(G) \leq \left\lfloor \frac{(k+2)(m+2)}{5} \right \rfloor - 4$.
\end{res}

Result \ref{r:DegDom} implies that $\g(Q^d)\ge 2^d/(d+1)$.
The following result shows that the actual value is not much greater, asymptotically.

\begin{res}
\label{r:CubeDom}
\cite{KabaPanc}
$\g(Q^d) \sim 2^d/d$.
\end{res}

\subsection{Optimal Pebbling Results}
\label{ss:OptPebb}

Finally, we list the optimal pebbling results we use to establish new cop pebbling theorems.

\begin{res}
\label{r:Optimal}
\cite{BCCMW}
For every graph $G$, $\ps(G)\le \lceil 2n/3\rceil$.
Equality holds when $G$ is a path or cycle.
\end{res}

Fractional pebbling allows for rational values of pebbles.
A {\it fractional pebbling step} from vertex $u$ to one of its neighbors $v$ removes $x$ pebbles from $u$ and adds $x/2$ pebbles to $v$, where $x$ is an rational number such that $0<x\le C(u)$.
The {\it optimal fractional pebbling number} of a graph $G$, denoted $\hps(G)$, is the minimum number $m$ so that, from \underline{some} configuration of size $m$, one can move, via fractional pebbling moves, a sum of one pebble to any specified target vertex.

\begin{res}
\label{r:fracopt}
\cite{HerHesHur,Moews}
For every graph $G$ we have $\ps(G)\ge\lceil\hps(G)\rceil$.
\end{res}

The authors of \cite{HerHesHur} prove that $\hps(G)$ can be calculated by a linear program.
Furthermore, they use this result to show that there is a uniform configuration that witnesses the optimal fractional pebbling number of any vertex-transitive graph; that is, the configuration $C$ defined by $C(v)=\hps(G)/n(G)$ for all $v$ fractionally solves any specified vertex.
From this they prove the following.

\begin{res}
\label{r:trans}
\cite{HerHesHur}
Let $G$ be a vertex-transitive graph and, for any fixed vertex $v$, define $m=\sum_{u\in V(G)}2^{-\dist(u,v)}$.
Then $\hps(G)=n(G)/m$.
\end{res}

For a configuration $C$ on a graph $G$, we say that a vertex $v$ is 2-{\it reachable} if it is possible to move two pebbles to $v$ via pebbling steps.
Then $C$ is 2-{\it solvable} if every vertex of $G$ is 2-reachable.

\begin{res}
\cite{BCCMW}
\label{r:2ReachPath}
If $C$ is a 2-solvable configuration of pebbles on the path $P_n$ then $|C|\ge n+1$.
\end{res}

For a subset $W$ of vertices in a graph $G$ we define the graph $G_W$ to have vertices $V(G_W)=V(G)-W\cup\{w\}$ (where $w$ is a new vertex) with edges $xy$ whenever $x,y\in V(G)-W$ and $xy\in E(G)$ and $xw$ whenever $x\in V(G)-W$ and $xz\in E(G)$ for some $z\in W$.
The process of creating $G_W$ from $G$ is called {\it collapsing} $W$.
If $C$ is a configuration on $G$ then we define the configuration $C_W$ on $G_W$ by $C_W(w)=\sum_{z\in W}C(z)$ and $C_W(x)=C(x)$ otherwise.
Note that $|C|=|C_W|$.

\begin{res}
\cite{BCCMW}
\label{r:Collapse}
Let $W$ be a subset of vertices in a graph $G$.
If a configuration $C$ on $G$ can reach the configuration $D$ on $G$ then the configuration $C_W$ on $G_W$ can reach the configuration $D_W$ on $G_W$.
In particular, we have $\ps(G)\ge\ps(G_W)$.
\end{res}

The next two results involve Cartesian products.

\begin{res}
\label{r:OptGridLower}
\cite{PetPorSto} 
For all $k\le m$ we have $\ps(P_k\Box P_m)\ge \frac{5092}{28593}km+O(k+m)\approx 0.1781km$.
\end{res}

This bound is much smaller than the $\frac{2}{7}km+8\approx 0.2857km$ upper bound proved in \cite{GyoKatPap}, which is conjectured to be best possible.

Combining Results \ref{r:fracopt} and \ref{r:trans} yields the lower bound of the following result.
The upper bound places $2^k$ pebbles on each vertex of a distance-$k$ dominating set, with $k$ roughly $d/3$.

\begin{res}
\label{r:OptCube}
\cite{BCCMW,Moews} 
For all $d\ge 1$ we have $(4/3)^d \le \ps(Q^d)\le (4/3)^{d+O(\lg k)}$.
\end{res}


\section{New Theorems on the Cop Pebbling Number}
\label{s:Main}

\subsection{Lower Bounds}
\label{ss:Lower}

The first two theorems give the cop number and optimal pebbling number as lower bounds to the cop pebbling number.

\begin{thm}
\label{t:CopBd}
For any graph $G$, $\pc(G) \ge c(G)$, with equality if and only if $G=K_1$.
\end{thm}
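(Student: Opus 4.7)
The plan is to prove $\pc(G) \ge c(G)$ by a strategy-stealing argument, then handle the equality case by treating $G = K_1$ separately and showing strict inequality for $G \ne K_1$.

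For the inequality, I would begin with a pebbling-winning configuration $C$ of size $\pc(G)$ and argue that the same $C$ is a winning starting configuration in standard (free-step) cops and robbers. The cops in the free-step game simulate the pebbling strategy: for each prescribed pebbling step $u \to v$, the free-step cops simply walk a single cop from $u$ to $v$, while the cop that would be sacrificed by the pebbling step stays put. A straightforward induction on turns shows the invariant that the free-step configuration pointwise dominates the pebbling configuration at every instant, so the cop that captures the robber in the pebbling game corresponds to a cop at the robber's vertex in the free-step game. This yields $c(G) \le |C| = \pc(G)$.

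For the equality characterization, $G = K_1$ gives $c(K_1) = \pc(K_1) = 1$ trivially. For $G \ne K_1$, $G$ is connected with $n \ge 2$, so $c(G) \le n-1$ (occupy any $n - 1$ vertices and use the cop adjacent to the remaining empty vertex to capture). A single pebbling cop cannot execute any pebbling step at all and hence cannot capture a robber parked at any non-cop vertex, so $\pc(G) \ge 2$, which settles the case $c(G) = 1$. Assume now $c(G) \ge 2$ and, for contradiction, $\pc(G) = c(G) = m$. Let $C$ be pebbling-winning of size $m \le n - 1$. Since $|C| < n$, the robber can start at a non-cop vertex $v^*$, which forces the cops to execute at least one pebbling step, whose source vertex $u^*$ must satisfy $C(u^*) \ge 2$. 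I would then refine the simulation: removing one cop at $u^*$ from $C$ yields a configuration $C'$ of size $m-1$ on which the free-step cops still capture, using a strengthened invariant in which the free-step configuration dominates the pebbling configuration except for a possible deficit of one at $u^*$, repaired the first time the pebbling strategy executes a step out of $u^*$. This exhibits a free-step winning configuration of size $m - 1$, contradicting $c(G) = m$.

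The main obstacle is that the source vertex $u^*$ of the first forced pebbling step may depend on the robber's starting vertex, whereas the cop deleted from $C$ must be fixed a priori. I expect to address this by fixing a single cops' pebbling strategy and analyzing the deficit invariant uniformly: either the strategy pebbles out of $u^*$ early against every robber (repairing the deficit in time for any capture), or, on any play where the strategy never touches $u^*$, the robber cannot terminate at $u^*$ with only one pebbling cop there, so the one-at-$u^*$ deficit never prevents capture. A careful choice of $u^*$ (picking a vertex where the initial concentration is at least two, guaranteed by the observation that some vertex of $C$ has multiplicity $\ge 2$ whenever $|C| < n$ and $C$ is pebbling-winning) should close the argument.
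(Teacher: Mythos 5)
Your proof of the inequality $\pc(G)\ge c(G)$ is the same one-line simulation the paper uses (the pointwise-domination invariant you describe is exactly why the simulation works). For the equality characterization you take a genuinely different route. The paper asserts that $\pc(G)=c(G)$ would force every robber to be captured with no pebbling steps, deduces that every vertex must then carry a cop so that $\pc(G)=n(G)$, and contradicts this via the bound $\pc(G)\le n-\D(G)+1\le n-1$ of Corollary \ref{c:upper} (treating $n=2$ by hand). You instead prove directly that whenever some robber forces a pebbling step --- which happens as soon as $|C|<n$, and $c(G)\le n-1$ for connected $G\ne K_1$ --- one cop can be deleted from the pebbling-winning configuration and the free-step game is still won, giving $c(G)\le \pc(G)-1$. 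This is self-contained (it does not lean on the later Corollary \ref{c:upper}), and it in fact supplies the justification that the paper's terse ``capturing the robber requires no steps'' step implicitly rests on. The obstacle you flag in your final paragraph dissolves more cleanly than you suggest: fix \emph{any} $u^*$ with $C(u^*)\ge 2$ (one exists, as you observe). Against any robber, if the strategy never pebbles out of $u^*$, then the pebbling count at $u^*$ never drops below $C(u^*)\ge 2$, so the free configuration, short by one there, still keeps a cop at $u^*$ and every capture carries over; and the first pebbling step out of $u^*$ (which requires two pebbles there, hence at least one free cop there) repairs the deficit permanently. So your sketch closes with no case analysis on where the first forced step occurs, and the only remaining work is to state the deficit invariant once and verify it under the two kinds of steps.
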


\begin{proof}
Any configuration of cops that can capture the robber via pebbling steps can also capture the robber via free steps.

If $G=K_1$ then $\pc(G)=1=c(G)$.

If $\pc(G)=c(G)$ then capturing the robber requires no steps.
That is, if a pebbling capture requires a pebbling step, then the cop lost during the step is irrelevant in the Cops and Robbers capture, which contradicts the equality.
That means that a successful pebbling configuration has no vertex without a pebble; i.e. $\pc(G)=n(G)$.
However, if $n(G)\ge 3$ then, by placing two cops at a vertex $v$ of degree at least two and one cop on each vertex not adjacent to $v$, then we can capture any robber in one step. 
(We especially highlight this statement, recorded as Corollary \ref{c:upper} in Subsection \ref{ss:Upper}, below.) 
Thus $\pc(G)<n$, a contradiction.
If $n(G)=2$ then $G=K_2$ and $\pc(K_2)=2>1=c(K_2)$, a contradiction.
Hence $G=K_1$.
\end{proof}

\begin{thm}
\label{t:OptBd}
For any graph $G$, $\pc(G) \geq \ps(G)$.
\end{thm}

\begin{proof}
Any configuration of $k$ cops, where $k < \ps(G)$, will contain a vertex $v$ which is unreachable. 
The robber can then choose to start and stay on $v$ and thus not be captured. 
\end{proof}

Typically, Theorem \ref{t:OptBd} gives a sharper lower bound on $\pc(G)$ than Theorem \ref{t:CopBd}. 
However, this may not be true for all graphs.

\subsection{Upper Bounds}
\label{ss:Upper}

\begin{thm}
\label{t:Cycle}
The cycle $C_n$ satisfies $\pc(C_n) \le \lceil \frac{2n}{3}\rceil$.
\end{thm}

\begin{proof}
For $C_n$, partition $C_n$ into $\lfloor \frac{n}{3} \rfloor$ copies of $P_3$ and, possibly, an extra $P_1$ or $P_2$. 
Place two cops on the center vertex of each $P_3$, and one cop on each vertex of the remaining one or two vertices.
The robber can only choose to start on one of the copies of $P_3$, where he is next to a pair of cops, and so will be captured on the first move.
Thus $\pc(C_n)\le \lceil \frac{2n}{3} \rceil$.
\end{proof}

\begin{thm}
\label{t:Tree}
If $G$ is a tree then $\pc(G) \le \ps(G)$.
\end{thm}

\begin{proof}
If $G$ is a tree then place $\ps(G)$ cops according to an optimal pebbling configuration $C$.
The robber beginning at some vertex $v$ uniquely defines the subtree $T$ containing $v$ for which every cop in $T$ is on a leaf of $T$, and any leaf of $T$ with no cop is a leaf of $G$.
Thus the robber can never escape $T$.

Now the cops follow the same greedy strategy used by optimal pebbling; i.e. at each state, they make all possible pebbling moves toward $v$.
The new tree $T'$ defined by the robber's new vertex $v'$ and resulting cop pebbles has strictly fewer vertices than $T$.
Hence this process ends after finitely many steps.

Suppose that we allow, for the moment, that pebbles can be split into fractions, so that a fractional pebbling step moves half the amount of pebbles and removes the other half.
Then the process above ends with the capture of the robber at some vertex $v^*$.
Because the set of such pebbling steps is greedy with respect to each temporary robber position, they are greedy with respect to vertex $v^*$.
That is, they are precisely the steps taken to solve $v^*$ from $C$ if the target $v^*$ was known in advance.

Therefore, because $C$ is an optimal pebbling configuration, each of the fractional steps are actually integral; i.e. $C$ captures the robber with pebbling steps.
Hence $\pc(G)\le\ps(G)$, and the equality follows.
\end{proof}

\begin{thm}
\label{t:doubledom}
Let $G$ be a graph, $S$ a subset of its vertices, and define $S'=V-N[S]$.
Then $\pc(G)\le 2|S|+|S'|$.
In particular, $\pc(G)\le 2\g(G)$.
\end{thm}

\begin{proof}
Place two cops on each vertex of $S$ and one cop on each vertex of $S'$.
In order to not be immediately captured, the robber must start in $N[S]-S$, but then is captured in one step by some pair of cops from $S$.
The second statement follows from choosing $S$ to be a minimum dominating set of $G$, since $S'=\emptyset$.
\end{proof}

The authors of \cite{CoDrHeHe} define a {\it Roman dominating set} of $G$ to be a $\{0,1,2\}$-labeling of $V(G)$ so that every vertex labeled $0$ is adjacent to some vertex labeled $2$.
Note that the construction in Theorem \ref{t:doubledom} yields a Roman dominating set by labeling each vertex by its number of cops.
The {\it Roman domination number} $\g_R(G)$ is defined to be the minimum sum of labels of a Roman dominating set.
Hence we obtain the following bound.

\begin{cor}
\label{c:Roman}
    Every graph $G$ satisfies $\pc(G)\le \g_R(G)$.
\end{cor}

To illustrate the improvement of $2|S|+|S'|$ compared to $2\g(G)$, consider the following example.

\begin{exa}
\label{e:doubledom}
For positive integers $m\ge 2k\ge 2$, let $Y=\{y_1,\ldots,y_m\}$ and let $Q=\{Q_1,\ldots,Q_k\}$ be a partition of $Y$ with each part size $|Q_i|\ge 2$.
Define a bipartite graph $G$ with vertices $Y$, $Z=\{z_1,\ldots,z_k\}$, and $x$ as follows.
For each $1\le j\le k$ set $z_j\sim y_i$ if and only if $y_i\in Q_j$.
Also set $x\sim y_i$ for every $1\le i\le m$.
Then $\g(G)=k+1$.
Indeed, since the neighborhoods of each $z_j$ are pairwise disjoint, at least $k$ vertices in $Y\cup Z$ are required to dominate $Z$, one from each $N[z_j]$.
Suppose that $S$ is a dominating set of size $k$.
By the above, $|S\cap N[z_j]|=1$ for all $j$.
But to dominate $x$, some $y_i$ must be in $S$.
Let $y_i\in N(z_j)$; then $y_i$ does not dominate any other $y_{i'}\in N(z_j)$.
Hence $\g(G)\ge k+1$.
It is easy to see that $Z\cup\{x\}$ is a dominating set, so that $\g(G)=k+1$.
By choosing $S=\{x\}$ we have $S'=Z$, so that $\pc(G)\le k+2$, much better than $2\g(G)=2k+2$.
\end{exa}

An obvious corollary of Theorem \ref{t:doubledom} (recorded as Corollary \ref{c:PcDomVertex}, below) is that any graph $G$ with a dominating vertex has $\pc(G)=2$, except $K_1$.
A more interesting corollary is the following.

\begin{cor}
\label{c:upper}
Every graph $G$ satisfies $\pc(G)\le n-\D(G)+1$.
In particular, if $n(G)\le 2$ then $\pc(G)=n$, if $n(G)\ge 3$ then $\pc(G)\le n-1$, and if $n(G)\ge 6$ then $\pc(G)\le n-2$.
\end{cor}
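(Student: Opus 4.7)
The plan is to deduce the main inequality $\pc(G) \le n - \D(G) + 1$ as a direct application of Theorem \ref{t:doubledom} with a carefully chosen dominating pair, and then handle the three size-dependent bounds case by case.

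For the main inequality, I would pick a vertex $v$ of maximum degree $\D(G)$ and let $S$ be the set consisting of $v$ together with all non-neighbors of $v$. This $S$ has size $n - \D(G)$ and is dominating, since $v$ dominates its $\D(G)$ neighbors while every non-neighbor dominates itself. The singleton $S' = \{v\} \subseteq S$ then dominates $V(G) \setminus S$, which is precisely the set of neighbors of $v$. Applying Theorem \ref{t:doubledom} yields $\pc(G) \le |S| + |S'| = n - \D(G) + 1$.

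The two smaller-$n$ size cases follow with little work. When $n(G) \le 2$, the paper's conventions leave only $K_2$ (together with the degenerate $K_1$), and the equality $\pc(G) = n$ is immediate: on $K_2$ a single cop cannot execute any pebbling step, so the robber evades by starting at the empty vertex, which combined with the main inequality gives $\pc(K_2) = 2$. When $n(G) \ge 3$, connectedness forces $\D(G) \ge 2$, and the main inequality then gives $\pc(G) \le n - 1$.

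The case $n(G) \ge 6$ is where the main inequality alone is not quite enough, and this is the one step requiring extra care. If $\D(G) \ge 3$ the inequality gives $\pc(G) \le n - 2$ directly. Otherwise $\D(G) = 2$, which together with connectedness forces $G$ to be either a path or a cycle; here the main inequality only yields $n - 1$. To close the gap I would invoke Theorem \ref{t:OptBd} together with Result \ref{r:Optimal} to conclude $\pc(G) = \ps(G) = \lceil 2n/3 \rceil$, and observe that $\lceil 2n/3 \rceil \le n - 2$ precisely when $n \ge 6$. The genuine obstacle is just recognizing that the uniform bound $n - \D + 1$ is too weak when $\D = 2$, forcing this fallback to the known path and cycle values.
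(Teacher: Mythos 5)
Your proof is correct and follows essentially the same route as the paper: apply Theorem \ref{t:doubledom} with $S'=\{v\}$ for a maximum-degree vertex $v$, then handle $\D(G)\le 2$ (paths and cycles) separately via the known value $\lceil 2n/3\rceil$. In fact your choice $S=\{v\}\cup(V(G)\setminus N[v])$ is slightly more careful than the paper's, which writes $S=V(G)-N[v]$ even though Theorem \ref{t:doubledom} requires $S'\subseteq S$; your citation of Theorem \ref{t:OptBd} plus Result \ref{r:Optimal} in place of Theorem \ref{t:pathcycle} is an equivalent packaging of the same fact.
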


\begin{proof}
Let $v$ be a vertex with $\deg(v)=\D(G)$, set $S=\{v\}$, and apply Theorem \ref{t:doubledom} to obtain the general bound.
Next, it is easy to see that $\pc(K_n)=n$ for $n\le 2$.
Then, a graph with at least three vertices has a vertex of degree at least two, so that $n-\D(G)+1\le n-1$.
Finally, if $\D(G)\ge 3$ then $n-\D(G)+1\le n-2$, while if $\D(G)\le 2$ then $G$ is a path or cycle, for which Theorem \ref{t:pathcycle} yields $\pc(G) = \lceil\frac{2n}{3}\rceil$, which is at most $n-2$ when $n\ge 6$.
\end{proof}

All three conditional bounds in Corollary \ref{c:upper} are tight: for example, $\pc(P_2)=2$, $\pc(P_5) = 4$, and $\pc(P_7) = 5$.
Furthermore, its more general bound of $n-\D(G)+1$ is tight for a graph with a dominating vertex (see Corollary \ref{c:PcDomVertex}).

\begin{thm}
\label{t:induced}
Let $H$ be an induced subgraph of a graph $G$.
Then, for any $s$, if $\pc(H)\le n(H)-s$ then $\pc(G)\le n(G)-s$.
\end{thm}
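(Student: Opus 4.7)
The plan is to take a winning configuration on $H$ of size $n(H)-s$ and embed it into $G$ by placing one extra cop on each vertex of $V(G)\setminus V(H)$. The resulting configuration $C$ on $G$ has size
\[ |C| \;=\; (n(H)-s) + (n(G)-n(H)) \;=\; n(G)-s, \]
which is exactly the desired bound, so everything reduces to showing that $C$ captures any robber.

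I would split on the robber's starting vertex. If he starts outside $V(H)$, he is captured instantly by the cop already sitting on his vertex. If instead he starts inside $V(H)$, the plan is for the cops belonging to the original $H$-configuration to execute their known winning $H$-strategy, while the guards on $V(G)\setminus V(H)$ remain stationary throughout the game. The robber is then effectively confined to $V(H)$, because any free step onto a vertex of $V(G)\setminus V(H)$ lands him on a waiting cop. So without loss of generality, his entire trajectory lies in $V(H)$.

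The one place where care is required — and the only place the hypothesis that $H$ is \emph{induced} is invoked — is that the game on $V(H)$ inside $G$ must coincide with the game on $H$. Since $H$ is induced, every edge of $G$ between two vertices of $V(H)$ is already an edge of $H$, so the robber's legal free steps within $V(H)$ match his legal moves on $H$, and every pebbling step prescribed by the cops' $H$-strategy is a legal pebbling step in $G$. Therefore the strategy carries over verbatim and captures the robber. I do not anticipate any real obstacle beyond this bookkeeping; the argument is essentially a direct embedding and uses no ideas about pebbling or capture beyond the defining properties of $\pc$.
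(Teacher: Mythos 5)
Your proposal is correct and matches the paper's argument exactly: place one cop on each vertex of $G-H$ and reuse the winning configuration of size $n(H)-s$ on $H$, for a total of $n(G)-s$ cops. The paper states the conclusion in one line ("Then $C_H$ captures any robber on $G$"), whereas you supply the justifying details (the stationary guards confine the robber to $V(H)$, and inducedness ensures the $H$-strategy transfers verbatim), which is a faithful elaboration rather than a different route.
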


\begin{proof}
Suppose that $\pc(H)\le n(H)-s$.
Then there is a configuration $C_H$ of $n(H)-s$ cops on $H$ that captures any robber on $H$.
It remains to show that this number of cops can still win when the robber may move off $H$. Define the configuration $C_G$ of $n(G)-s$ cops on $G$ by placing one cop on each vertex of $G-H$ and $C_H(v)$ cops on each vertex $v\in H$.
Then $C_H$ captures any robber on G.
\end{proof}

\begin{cor}
\label{c:upper2}
For all $s\ge 2$ there is an $N=N(s)$ such that every graph $G$ with $n=n(G)\ge N$ has $\pc(G)\le n-s$.
\end{cor}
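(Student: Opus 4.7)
The plan is to dichotomize on the maximum degree of $G$. If $\D(G) \ge s+1$, then Corollary \ref{c:upper} immediately yields $\pc(G) \le n - \D(G) + 1 \le n - s$, with no largeness hypothesis on $n$ required whatsoever. So the only case to address is when $\D(G) \le s$, which is a bounded-degree regime for each fixed $s$.

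In that regime, my strategy is to find a long induced path and invoke Theorem \ref{t:induced}. By Theorem \ref{t:pathcycle}, $\pc(P_m) = \lceil 2m/3 \rceil$, and $\pc(P_m) \le m - s$ holds precisely when $\lfloor m/3 \rfloor \ge s$, i.e., when $m \ge 3s$. Hence it suffices to guarantee that every connected $G$ with $\D(G) \le s$ on sufficiently many vertices contains $P_{3s}$ as an induced subgraph.

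To produce such an induced path, I would apply the standard Moore-type counting bound: a connected graph with maximum degree at most $s$ and diameter at most $D$ has at most $1 + s\sum_{i=0}^{D-1}(s-1)^{i}$ vertices. Choosing $N(s)$ to exceed this quantity with $D = 3s-2$ forces $\diam(G) \ge 3s-1$ whenever $n(G) \ge N(s)$. Any shortest path between two vertices realizing the diameter is chord-free, hence an induced path on at least $3s$ vertices; truncating to its first $3s$ vertices gives an induced copy of $H = P_{3s}$ inside $G$. Theorem \ref{t:induced} then delivers $\pc(G) \le n - s$, completing the argument.

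There is no serious obstacle here. The only steps requiring any care are the small arithmetic check that $\lceil 2m/3 \rceil \le m - s$ for $m \ge 3s$, and the observation that shortest paths are automatically induced; both are routine. The value of $N(s)$ produced is exponential in $s$, which is doubtless wasteful, but the statement asks only for existence.
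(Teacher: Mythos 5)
Your proposal is correct and follows essentially the same route as the paper's proof: reduce via Corollary \ref{c:upper} to the case $\D(G)\le s$, then use a Moore-type bound to force a long induced (shortest) path, and conclude with Theorem \ref{t:pathcycle} and Theorem \ref{t:induced}. Your write-up is in fact slightly more explicit than the paper's about why bounded degree and bounded diameter cap $n(G)$, and about why a shortest path is induced, but the underlying argument is identical.
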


\begin{proof}
Suppose that $\pc(G)\ge n-s+1$. 
Then Corollary \ref{c:upper} implies that $\D(G)\le s$. 
Consider if $\diam(G)\ge 3s$. 
Then there exists an induced path $P$ of length $3s$ in $G$. 
By Theorem \ref{t:pathcycle} we have $\pc(P_{3s})=2s\le 2s+1=n(P)-s$. 
By Theorem \ref{t:induced}, we must have that $\pc(G)\le n-s$, contradicting our assumption that $\pc(G)\ge n-s+1$. 
Thus, we conclude that $\diam(G)<3s$. 
Since there are finitely many (at most $\D(G)^{\diam(G)}$) such graphs, there must be some $N$ such that $\pc(G)\le n-s$ for all $s\ge N$.
\end{proof}

One might be interested in measuring the gap between the size of a graph and its cop pebbling number.
For this we define the {\it cop deficiency} of a graph $G$ to be {\rm\ui}$^\sc(G):=n(G)-\pc(G)$.
Then Theorem \ref{t:induced} and Corollary \ref{c:upper2} can be restated as follows.

\begin{thm}
\label{t:induced2}
Let $H$ be an induced subgraph of a graph $G$.
Then {\rm\ui}$^\sc(G)\ge$ {\rm\ui}$^\sc(H)$.
\end{thm}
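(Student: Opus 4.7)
The plan is to recognize Theorem \ref{t:induced2} as a direct reformulation of Theorem \ref{t:induced} in the language of cop deficiency, obtained by a single substitution. I would choose the parameter $s$ in Theorem \ref{t:induced} to be $s := {\rm\ui}^\sc(H) = n(H) - \pc(H)$. The first step is to verify that this is a nonnegative integer: placing one cop on every vertex of $H$ captures any robber with no pebbling steps required, so $\pc(H) \le n(H)$, and thus $s \ge 0$.

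With this choice, $\pc(H) = n(H) - s$ and in particular $\pc(H) \le n(H) - s$, so the hypothesis of Theorem \ref{t:induced} is met. Applying that theorem delivers $\pc(G) \le n(G) - s$, which rearranges to ${\rm\ui}^\sc(G) = n(G) - \pc(G) \ge s = {\rm\ui}^\sc(H)$.

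The substantive content --- extending an optimal cop-pebbling configuration from $H$ to $G$ by placing one cop on each vertex of $V(G) \setminus V(H)$, so that any attempted move by the robber out of $H$ results in immediate capture --- has already been carried out in the proof of Theorem \ref{t:induced}. There is therefore no genuine obstacle here; the only point to double-check is that the chosen $s$ is a valid (nonnegative) input to Theorem \ref{t:induced}, which we have noted. The statement amounts to a purely cosmetic repackaging of Theorem \ref{t:induced}, and the formal proof should fit on a line or two.
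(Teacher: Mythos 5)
Your proposal is correct and matches the paper exactly: the paper offers no separate proof, explicitly presenting Theorem \ref{t:induced2} as a restatement of Theorem \ref{t:induced}, which is precisely the substitution $s = n(H)-\pc(H)$ you carry out. (Your check that $s\ge 0$ is harmless but unnecessary, since Theorem \ref{t:induced} is stated for any $s$.)
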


\begin{cor}
\label{c:upper3}
For all $s\ge 2$ there is an $N=N(s)$ such that every graph $G$ with $n=n(G)\ge N$ has {\rm\ui}$^\sc(G)\ge s$.
\end{cor}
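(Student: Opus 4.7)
The plan is to observe that this corollary is nothing more than Corollary \ref{c:upper2} rewritten in the language of the cop deficiency. I would first unpack the definition {\rm\ui}$^\sc(G) = n(G) - \pc(G)$, so that the inequality $\pc(G) \le n - s$ is algebraically equivalent to $n - \pc(G) \ge s$, that is, {\rm\ui}$^\sc(G) \ge s$. The quantifier structure ``for all $s \ge 2$, there exists $N = N(s)$, for every graph $G$ with $n(G) \ge N$'' is identical in the two statements, so the same threshold $N(s)$ produced in the proof of Corollary \ref{c:upper2} works here verbatim.

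Given this, the proof reduces to a single line: invoke Corollary \ref{c:upper2} and rearrange. There is no real obstacle, as the substantive work was done upstream in Corollary \ref{c:upper2}, whose argument relied on Corollary \ref{c:upper} to force $\D(G) \le s$, then used Theorem \ref{t:induced} applied to a long induced path to handle the case $\diam(G) \ge 3s$, and finally closed out by noting that only finitely many graphs of bounded diameter and bounded maximum degree remain. The present corollary inherits all of this with no modification, so the proposal is simply: apply Corollary \ref{c:upper2}, then subtract $\pc(G)$ from $n$ on both sides.
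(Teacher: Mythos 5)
Your proposal is correct and matches the paper exactly: the paper presents Corollary \ref{c:upper3} as nothing more than a restatement of Corollary \ref{c:upper2} in the language of the cop deficiency {\rm\ui}$^\sc(G)=n(G)-\pc(G)$, offering no separate argument. Rearranging $\pc(G)\le n-s$ to {\rm\ui}$^\sc(G)\ge s$ with the same threshold $N(s)$ is all that is needed.
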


\begin{thm}
\label{t:capture}
If $G$ is a cop-win graph with $\capt(G) = t$, then $\pc(G) \leq 2^{t}$.
More generally, if $c(G)=k$ and $\capt_k(G) = t$ then $\pc(G) \leq k2^{t}$.
\end{thm}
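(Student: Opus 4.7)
The plan is to directly simulate an optimal standard cops-and-robbers winning strategy in the pebbling setting, stockpiling $2^t$ cops at each of the $k$ starting vertices so that the halving forced by pebbling steps still leaves at least one cop at the location of capture.

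First I would fix an optimal placement $v_1,\ldots,v_k$ along with a strategy $\Sigma$ that captures any robber in at most $t=\capt_k(G)$ turns using free steps. In the pebbling game I would place $2^t$ cops on each $v_i$, for a total of $k\cdot 2^t$ cops, and then play on $G$ by maintaining, for each of the $k$ virtual cops tracked by $\Sigma$, a designated pile whose location mirrors that cop's position in the standard game.

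The simulation step is the heart of the argument. At each turn, whenever $\Sigma$ moves a virtual cop from $u$ to an adjacent $w$, I execute a single pebbling step from $u$ to $w$ on that pile (consuming two pebbles to place one at $w$); virtual cops that $\Sigma$ leaves in place incur no pebbling cost, so the corresponding pile is unchanged. By induction on the number of turns elapsed, after $i$ turns each pile contains at least $2^{t-i}$ cops, sitting at the vertex dictated by $\Sigma$. Because the robber in cops and robbers pebbling has exactly the same moves available as in standard cops and robbers, $\Sigma$ still forces capture within $t$ turns; at that moment the capturing pile contains at least $2^{t-t}=1$ cop on the robber's vertex, so capture is achieved. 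Specializing to $k=1$ yields $\pc(G)\le 2^t$.

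I do not anticipate a real obstacle here, only a bookkeeping verification: namely, that a single free step of a cop translates to exactly one pebbling step (halving only the portion of the pile that moves), and that standing still costs nothing, so no pebbles are lost beyond what the halving strictly demands. This justifies the inductive count $2^{t-i}$ and completes the bound $\pc(G)\le k\cdot 2^t$.
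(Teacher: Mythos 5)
Your proposal is correct and is essentially the paper's own argument: place $2^t$ cops on each of the $k$ optimal starting vertices and simulate the free-step capture strategy, with each pile halving (at worst) per turn so that at least one cop survives the at most $t$ turns needed for capture. You simply make explicit the inductive invariant ($2^{t-i}$ cops per pile after $i$ turns) that the paper leaves implicit.
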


\begin{proof}
If $G$ is a cop-win graph with $\capt(G) = t$, then there is some vertex $v$ at which the cop begins and the robber can be caught with free steps in at most $t$ moves. 
If $2^t$ cops are placed on $v$, the cops can use the same capture strategy, and there will be sufficiently many cops for up to $t$ pebbling steps.
Similarly, by placing $2^t$ on each of $c(G)$ cops, there will be sufficiently many cops for up to $t$ rounds of pebbling steps.
\end{proof}

For example, let $T$ be a complete $k$-ary tree of depth $t$.
Then $\capt(T)=t$ by Result \ref{r:chordalcapture}, and so $\pc(T)\le 2^t$.
Theorem \ref{t:capture} is tight for some graphs, as witnessed by any graph $G$ with a dominating vertex (see Corollary \ref{c:PcDomVertex}, below).
It is also tight for any complete $k$-ary tree $T$ of depth two, when $k\ge 3$ (the lower bound follows from Theorem \ref{t:pathcycle} since $T$ contains $P_5$).

\begin{cor}
\label{c:chordalbd}
If $G$ is a chordal graph with radius $r$, then $\pc(G) \leq 2^{r}$.
\end{cor}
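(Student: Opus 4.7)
The plan is to combine three facts already set up in the paper. First, I would invoke the remark just before Result \ref{r:copwincharacterization} noting that chordal graphs are dismantlable, together with Result \ref{r:copwincharacterization}, to conclude that every chordal graph $G$ is cop-win, i.e.\ $c(G)=1$. Next, I would apply Result \ref{r:chordalcapture} to get $\capt(G)\le r$. Finally, Theorem \ref{t:capture} in the cop-win case states that $\pc(G)\le 2^{\capt(G)}$.

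Chaining these, $\pc(G)\le 2^{\capt(G)}\le 2^r$, which is exactly the claim. There is essentially no obstacle here beyond citing the three results in the correct order; the corollary is really a one-line composition of Results \ref{r:copwincharacterization} and \ref{r:chordalcapture} with Theorem \ref{t:capture}, and the argument is monotone in $\capt(G)$ since the cop-win strategy for $G$ certifying capture in $\capt(G)$ turns can be simulated by $2^{\capt(G)}$ pebbles (and hence also by $2^r$ pebbles) placed at the radius-certifying center vertex.

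If anything required even slight care, it would be making sure that the bound $\capt(G)\le r$ from Result \ref{r:chordalcapture} refers to the same quantity used in Theorem \ref{t:capture}, namely the single-cop capture time starting from some optimal vertex. Both results deal with the cop-win setting, so $\capt(G)$ has the same meaning in both, and the substitution into Theorem \ref{t:capture} is immediate.
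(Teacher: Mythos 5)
Your proposal is correct and follows exactly the paper's own route: the paper's proof is the one-line citation of Result \ref{r:chordalcapture} and Theorem \ref{t:capture}, with the cop-win status of chordal graphs (via dismantlability and Result \ref{r:copwincharacterization}) left implicit. Your extra care about the meaning of $\capt(G)$ and the monotonicity $2^{\capt(G)}\le 2^r$ is sound but adds nothing beyond what the paper already assumes.
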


\begin{proof}
The result follows from Result \ref{r:chordalcapture} and Theorem \ref{t:capture}.
\end{proof}

\begin{thm}
\label{t:treebound}
If $T$ is an $n$-vertex tree, then $\pc(T) \leq \lceil \frac{2n}{3} \rceil$.
\end{thm}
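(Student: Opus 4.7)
The statement follows as an immediate corollary of two results already stated in the excerpt, and the plan is simply to chain them. By Theorem~\ref{t:OptBd}, any tree satisfies $\pc(T)=\ps(T)$. By Result~\ref{r:Optimal}, every $n$-vertex graph $G$ satisfies $\ps(G)\le\lceil 2n/3\rceil$. Composing these inequalities gives $\pc(T)=\ps(T)\le\lceil 2n/3\rceil$, which is the desired bound.

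There is essentially no obstacle, since both ingredients are in hand; the only care needed is in recalling why Theorem~\ref{t:OptBd} applies on the tree side. That proof places $\ps(T)$ cops according to an optimal pebbling configuration $C$, then observes that the robber's starting vertex $v$ determines a subtree $T_v\subseteq T$ whose leaves are either occupied by cops or are leaves of $T$ itself; the robber is therefore trapped inside $T_v$. Since $C$ can pebble to any vertex of $T$, and hence to any vertex of $T_v$ to which the robber may move, the cops can always capture him, giving $\pc(T)\le\ps(T)$ (with the reverse inequality being the general lower bound from Theorem~\ref{t:OptBd}).

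A more self-contained alternative would be to avoid the detour through $\ps$ and directly exhibit a winning configuration of size $\lceil 2n/3\rceil$ on $T$, for instance by imitating the optimal pebbling configuration for paths (two pebbles every three vertices along a suitable traversal of $T$) and verifying that the resulting placement wins the cop pebbling game. However, this would essentially reprove the tree case of Result~\ref{r:Optimal} together with the tree portion of Theorem~\ref{t:OptBd}, so the two-step reduction above is cleaner and is what I would write.
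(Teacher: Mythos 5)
Your proof is correct, but it takes a genuinely different route from the paper. The paper proves this theorem by a self-contained induction on a longest path $P$ in $T$: letting $z$ be an endpoint of $P$ with neighbor $y$ and next vertex $x$, it splits into cases on $\deg(y)$ and $\deg(x)$, prunes three vertices at a time, and adds two cops to $y$, thereby explicitly constructing a winning configuration of size $\lceil 2n/3\rceil$ without invoking the optimal pebbling bound. Your argument instead chains the tree case of Theorem~\ref{t:OptBd} (which gives $\pc(T)\le\ps(T)$ via the subtree-trapping argument) with Result~\ref{r:Optimal} ($\ps(G)\le\lceil 2n/3\rceil$). This is legitimate and not circular: the tree half of Theorem~\ref{t:OptBd} is proved independently of Theorem~\ref{t:treebound} (only the cycle half routes through Theorem~\ref{t:pathcycle}, which in turn cites Theorem~\ref{t:treebound}), and indeed the paper itself concedes your shortcut in the remark immediately following its proof (``Hence if $G$ is a tree then $\pc(G)=\ps(G)$''). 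What the two approaches buy is different: yours is shorter but leans on an external result from \cite{BCCMW} and on the somewhat terse trapping argument in Theorem~\ref{t:OptBd}; the paper's induction is longer but fully constructive and independent of the optimal pebbling literature. If you submit the two-line version, you should at least flag the non-circularity of the dependency on Theorem~\ref{t:OptBd}, since the cycle case of that theorem does cite results proved after this one.
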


\begin{proof}
This follows from Theorem \ref{t:Tree} and Result \ref{r:Optimal}.
\end{proof}

We note that Theorems \ref{t:capture} and \ref{t:treebound} can each be stronger than each other, as the following two examples show.
Define the {\it spider} $S(k,d)$ to be the tree having a unique vertex $x$ of degree greater than 2, all $k$ of whose leaves have distance $d$ from $x$.

\begin{exa}
\label{e:treebound1}
For integers $k$ and $d$, the spider $S=S(k,d)$ has $c(S)=1$ and $\capt(S)=d$, with $n=kd+1$.
Thus Theorem \ref{t:capture} yields $\pc(S)\le 2^d$, while Theorem \ref{t:treebound} yields $\pc(S)\le \lceil (2kd+2)/3\rceil$.
Hence one bound is stronger than the other depending on how $k$ compares, roughly, to $3\cdot 2^{d-1}/d$.
\end{exa}

\begin{exa}
\label{e:treebound2}
For integers $k, t\ge 1$, let $T$ be the complete $k$-ary tree of depth $t$.
Then $n(T)=\sum_{i=0}^tk^i=(k^{t+1}-1)/(k-1)$.
Thus Theorem \ref{t:capture} is stronger than Theorem \ref{t:treebound} for $k\ge 3$ and for $k=2$ with $t\ge 2$, while Theorem \ref{t:treebound} is stronger than Theorem \ref{t:capture} when $k=1$ and $t\ge 5$ (because $\capt(P_t)=\lceil t/2\rceil$).
\end{exa}

\begin{thm}
\label{t:girth}
For any positive integer $d$, if $G$ is a graph with $\gir(G) \ge 4d-1$, then $\pc(G) \leq 2^d\g_d(G)$.
\end{thm}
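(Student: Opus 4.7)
The plan is to let $S$ be a minimum distance-$d$ dominating set, so that $|S|=\g_d(G)$, and place $2^d$ cops on each vertex of $S$, giving a total of $2^d\g_d(G)$ cops. For any robber starting at a vertex $v$, pick $s\in S$ with $\dist(s,v)\le d$ and use the $2^d$ cops at $s$ to execute a tree-pursuit strategy in $G[N_d[s]]$.

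The first main step is to observe that, for $d\ge 2$, the hypothesis $\gir(G)\ge 4d-1$ implies that $G[N_d[s]]$ is a tree rooted at $s$. Indeed, any cycle in $N_d[s]$ would, via its vertex farthest from $s$ together with BFS-shortest paths through that vertex's two cycle-neighbors, produce in $G$ a cycle of length at most $2d+1$, contradicting $\gir(G)\ge 4d-1\ge 2d+2$; the case $d=1$ reduces directly to Theorem \ref{t:doubledom}. Within the depth-$d$ tree $G[N_d[s]]$, a recursive argument then shows that $2^d$ cops at the root $s$ capture any robber confined to the tree in at most $d$ turns: if $2^{d-t}$ cops sit at a vertex $u$ of depth $t$ on the unique path to the robber's current position, they pebble one step to $u$'s child on that path, arriving with $2^{d-t-1}$ cops; because cops move first each turn, the robber can never slip past them to shallower vertices, so by induction on $d-t$ the pursuit terminates in capture.

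The primary obstacle is that the robber is not \emph{a priori} confined to $N_d[s]$: from a depth-$d$ vertex he may step outside to a neighbor at distance $d+1$ from $s$. By the distance-$d$ dominating property, such an escape places him in $N_d[s']$ for some $s'\ne s$, where the fresh $2^d$ cops at $s'$ can take over the pursuit. Making this handoff rigorous is the crux of the argument, and here I expect the full strength $\gir(G)\ge 4d-1$ (rather than merely $\ge 2d+2$) to be essential: the stronger girth rules out the short cycles on which the robber could rapidly shuttle between overlapping dominator balls to exhaust multiple cop groups. A clean route I would pursue is to first establish that $\g_d(G)$ cops placed on $S$ already capture any robber in at most $d$ steps in standard cops and robbers under the girth hypothesis, whereupon the desired bound $\pc(G)\le 2^d\g_d(G)$ is immediate from the pebbling-replacement idea behind Theorem \ref{t:capture}.
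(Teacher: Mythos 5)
Your setup (place $2^d$ cops on each vertex of a minimum distance-$d$ dominating set, exploit that balls of radius $d$ are trees under the girth hypothesis, and run a halving pursuit down a tree) matches the paper's, but your proof has a genuine gap exactly where you flag one: the handoff when the robber exits $N_d[s]$ is never carried out, and your closing sentence only describes a reduction you ``would pursue'' rather than establishing it. The shuttling-between-balls worry you raise is real in your framing, because a single cop group descending from $s$ loses cops each turn while the robber gains distance, so without a rigorous transfer of responsibility the argument does not close. Asserting that $\g_d(G)$ free-stepping cops on $S$ capture in at most $d$ turns is essentially the whole theorem in disguise, so deferring it leaves the proof incomplete.

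The paper sidesteps the handoff entirely by re-centering the analysis: it works inside $T=N_d[v]$, the ball around the robber's \emph{starting} vertex $v$ (a tree, by the girth bound), rather than inside the ball around a single dominator. Every cop group at a $v_j$ whose ball meets $T$ marches along the (unique, again by girth $\ge 4d-1$) shortest $v_jv$-path into $T$ and then pursues the robber along tree paths; the girth hypothesis also forces each intersection $N_d[v_j]\cap T$ to lie along a single path of $T$, and the dominating property forces these intersections to cover all of $T$. Since the whole game lasts at most $d$ turns, the robber can never leave $N_d[v]$ at all --- he simply does not have time --- so no handoff is needed. The blocking property (a cop at $x$ forever bars the robber from any vertex whose tree-path to the robber passes through $x$) plus the covering of $T$ then yields capture within $d$ turns. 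If you want to salvage your version, this is the missing idea: anchor the tree at the robber's initial position and use the $d$-turn clock to confine him, instead of trying to chase him out of one dominator's ball and into another's.
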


\begin{proof}
Let $S = \{v_1, v_2, ...\}$ be a minimum $d$-distance dominating set of $G$, and place $2^d$ cops on each $v_i$.
Suppose the robber starts at vertex $v$.
Since $\gir(G)\ge 4d-1$, we know that $T=N_d[u]$ is a tree for all $u$.
We write $T_i=N_d[v_i]$ and, for each $v\in T$, denote the unique $vu$-path in $T$ by $P(v)$.

Let $J$ be such that $T\cap T_j\not=\emptyset$ if and only if $j\in J$, and set $Q_j=T\cap T_j$.
Note that $\gir(G)\ge 4d-1$ implies that, for each $j\in J$, there is some $v\in T$ such that $Q_j\subseteq P(v)$.
Moreover, by the definition of $S$, we have $\cup_{j\in J}Q_j=T$.
In addition, $\gir(G)\ge 4d-1$ implies that, for each $j\in J$, the shortest $v_iv$-path $P^*_i$ is unique.

For each $j\in J$, each cop at $v_j$ adopts the strategy to move at each turn toward $v$ along $P^*_i$ until reaching $T$, at which time then moving toward the robber along the unique path in $T$.
This strategy ensures the property that, at any point in the game, if some cop is on vertex $x$ while the robber is on vertex $z$, then the robber can never move to a vertex $y$ for which the unique $yz$-path in $T$ contains $x$ --- which includes $x$ itself.
It also implies that the game will last at most $d$ turns.
Hence, if we suppose that the robber wins the game, then the game lasted exactly $d$ turns and the robber now sits on some vertex $z$.
However, by the definition of $S$, some cop reached $z$ within $d$ turns, which implies by the property just mentioned that the robber cannot move to $z$, a contradiction.
Hence the cops win the game, capturing the robber.
\end{proof}

An obvious corollary (recorded as Corollary \ref{c:PcDomVertex}, below) is that any graph $G$ with a dominating vertex has $\pc(G)=2$.

We remark that Theorem \ref{t:girth} applies to trees.
$P_5$ is an example for which this bound is tight. 
In the case of the spider $S(k,2)$, this bound is significantly better than Corollary \ref{c:upper} when $k$ is large.
The case $d=1$ yields the same upper bound of $2\g(T)$ from Theorem \ref{t:doubledom}, which is better than the bound of Theorem \ref{t:treebound} if and only if $\g(T)<\lceil (n-1)/3\rceil$.
Since $\g(T)$ can be as high as $n/2$, both theorems are relevant.
The following example shows that Theorem \ref{t:girth} can be stronger than Theorem \ref{t:treebound} for any $d$.

\begin{exa}
\label{e:ternary}
For $1\le i\le 3$, define the tree $T_i$ to be the complete binary tree of depth $d-1$, rooted at vertex $v_i$, and define the tree $T$ to be the union of the three $T_i$ with an additional root vertex adjacent to each $v_i$.
Then $\g_d(T)=d$, and $n=3(2^d-1)+1$, so that the bound from Theorem \ref{t:girth} is stronger than the bound from Theorem \ref{t:treebound}.
\end{exa}

Theorem \ref{t:girth} can be stronger than other prior bounds as well, as shown by the following example.

\begin{exa}
\label{e:theta}
For integers $k$ and $d$, define the {\it theta graph} $\Th(k,d)$ as the union of $k$ internally disjoint $xy$-paths, each of length $d$.
Then $\Th=\Th(k,2d)$ has $n=k(d-1)+2$, $c(\Th)=2$, $\capt_2(\Th)=d$, $\gir(\Th)=4d$, $\g(\Th)=k\lceil (2d-3)/3\rceil$, and $\g_d=2$.
Thus Theorem \ref{t:doubledom} yields an upper bound of roughly $4kd/3$, while Theorems \ref{t:capture} and \ref{t:girth} both yield the upper bound of $2^{d+1}$, which is better or worse than Theorem \ref{t:doubledom} when $k$ is bigger or less than, roughly, $3\cdot 2^{d-1}/d$.
\end{exa}

The following example illustrates the need for stronger bounds than given by Theorem \ref{t:girth}.

\begin{exa}
\label{e:McGee}
Consider the $(3,7)$-cage {\it McGee graph} $M$, defined by $V=\{v_i\mid i\in\zZ_{24}\}$, with $v_i\sim v_{i+1}$ for all $i$, $v_i\sim v_{i+12}$ for all $i\equiv 0\pmod{3}$, and $v_i\sim v_{i+7}$ for all $i\equiv 1\pmod{3}$.
We have $\g_2(M)\le 4$ (e.g. $\{v_0,v_6,v_9,v_{15}\}$), and so $\ps(M)\le\pc(M)\le 16$ by Theorem \ref{t:girth}.
However, this bound is not tight, as $\pc(M)\le 12$: the vertex set $\{v_i\mid i\equiv 0\pmod{3}\}$ induces a matching of size 4 --- for each edge, place 2 cops on one of its vertices and 1 cop on the other.
Incidentally, this yields $\ps(M)\le 12$; the best known lower bound on $\ps$ comes from Result \ref{r:trans}: $\ps(M) \ge \lceil \hps(M) \rceil = \lceil 64/7 \rceil = 10$.
Hence we are left with a gap in the bounds for $M$: $10\le \ps(M)\le \pc(M)\le 12$.
\end{exa}

\subsection{Exact Results}
\label{ss:Exact}

The following is a corollary of Theorem \ref{t:doubledom}, as well as of Theorem \ref{t:girth}.

\begin{cor}
\label{c:PcDomVertex}
If $G$ is a graph with at least two vertices and a dominating vertex then $\pc(G) = 2$.
\end{cor}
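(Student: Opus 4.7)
The plan is to derive the bound $\pc(G)\le 2$ directly from either Theorem \ref{t:doubledom} or Theorem \ref{t:girth}, and then to check the matching lower bound $\pc(G)\ge 2$ by a short argument ruling out a single cop.

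For the upper bound via Theorem \ref{t:doubledom}, I would let $v$ be a dominating vertex of $G$ and take $S=\{v\}$. Since $v$ is adjacent to every other vertex, $S$ is a dominating set of $G$, and setting $S'=S=\{v\}$, the set $S'$ trivially dominates $V(G)-S$ (indeed, every vertex in $V(G)-S$ is adjacent to $v$). Theorem \ref{t:doubledom} then yields $\pc(G)\le |S|+|S'|=2$. Alternatively, via Theorem \ref{t:girth} with $d=1$: $\gir(G)\ge 3=4(1)-1$ and $\g_1(G)=\g(G)=1$, so $\pc(G)\le 2^1\cdot 1=2$.

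For the lower bound, I would note that by our standing assumption $n(G)\ge 2$, so there is at least one vertex $u\ne v$. A single cop placed at any vertex $w$ cannot initiate any pebbling step (since a pebbling step requires at least two pebbles on the source vertex), and the robber can start on any vertex different from $w$ to avoid immediate capture. Hence $\pc(G)\ge 2$, which one can also obtain from Theorem \ref{t:OptBd} together with the observation that $\ps(G)\ge 2$ whenever $n(G)\ge 2$.

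There is no real obstacle here; the only thing to be mildly careful about is verifying that the hypotheses of the cited theorems are in fact met (a one-vertex dominating set trivially dominates its complement, and $G$ automatically has girth at least $3$) and citing the standing assumption that components have at least two vertices so that $\pc(G)\ge 2$.
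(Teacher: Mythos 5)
Your proof is correct and follows essentially the same route as the paper, which presents this as an immediate corollary of Theorem \ref{t:doubledom} (taking $S=S'=\{v\}$) and, alternatively, of Theorem \ref{t:girth} with $d=1$; your lower-bound observation that a single cop can never execute a pebbling step supplies the matching $\pc(G)\ge 2$ that the paper leaves implicit.
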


The following is a corollary of Result \ref{r:almost} and Theorem \ref{t:induced}.

\begin{cor}
Almost all cop-win graphs $G$ have $\pc(G)=2$.
\end{cor}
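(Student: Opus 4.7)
The plan is to chain Result \ref{r:almost} directly with Corollary \ref{c:PcDomVertex}. By Result \ref{r:almost}, almost all cop-win graphs $G$ satisfy $\g(G)=1$, i.e.\ they possess a dominating vertex. By Corollary \ref{c:PcDomVertex}, any such graph already satisfies $\pc(G)=2$. Since the class of cop-win graphs with a dominating vertex is a subset of the class of cop-win graphs, the proportion of cop-win graphs on $n$ vertices with $\pc(G)=2$ is at least the proportion with $\g(G)=1$, which tends to $1$ as $n\to\infty$. This yields the desired statement.

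The one sanity check worth recording is the lower bound $\pc(G)\ge 2$, so that the conclusion is an equality rather than just $\pc(G)\le 2$. For any graph $G$ with $n(G)\ge 2$, placing only a single cop leaves at least one uncovered vertex which the robber may occupy, and no pebbling step is available from a pile of size $1$; hence $\pc(G)\ge 2$. Cop-win graphs considered in this paper have $n(G)\ge 2$ (the paper restricts to components of size at least two), so this matches the upper bound from Corollary \ref{c:PcDomVertex}.

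There is essentially no main obstacle: both ingredients are already proved earlier in the excerpt, and the corollary is a one-line combination. The reference to Theorem \ref{t:induced} in the hypothesis line appears to be inessential; the whole content of the argument passes through $\g(G)=1\Rightarrow\pc(G)=2$.
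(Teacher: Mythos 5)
Your proof is correct and matches the paper's intended (one-line) argument: the paper derives this corollary from Result \ref{r:almost} together with the fact that a dominating vertex forces $\pc(G)=2$, and its citation of Theorem \ref{t:induced} appears to be a slip for Corollary \ref{c:PcDomVertex}, which is exactly the ingredient you use. Your added check that $\pc(G)\ge 2$ for $n(G)\ge 2$ is a reasonable (and correct) piece of diligence that the paper leaves implicit.
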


\begin{thm}
\label{t:pathcycle}
For all $n\ge 1$ we have $\pc(P_n) = \pc(C_n) = \lceil \frac{2n}{3} \rceil$. 
\end{thm}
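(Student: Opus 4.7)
The plan is to split the theorem into the path and cycle cases and handle each separately; for paths the bound comes essentially for free from results already in the paper, while for cycles we need to exhibit an explicit winning configuration.

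For $P_n$: note that the tree portion of the proof of Theorem \ref{t:OptBd} establishes $\pc(T)=\ps(T)$ for every tree without referring to Theorem \ref{t:pathcycle}, so applying it with $T=P_n$ and combining with Result \ref{r:Optimal} gives $\pc(P_n)=\ps(P_n)=\lceil 2n/3\rceil$ immediately. No further argument is needed here.

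For $C_n$, the lower bound $\pc(C_n)\ge\lceil 2n/3\rceil$ comes from the opening paragraph of the proof of Theorem \ref{t:OptBd}, which proves the general inequality $\pc(G)\ge\ps(G)$ without invoking the cycle case, together with Result \ref{r:Optimal}. So the real work is to demonstrate a matching upper bound by exhibiting a configuration of $\lceil 2n/3\rceil$ cops that wins against any robber.

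The proposed winning configuration is the natural ``one every three'' pattern. Write $n=3k+r$ with $r\in\{0,1,2\}$, label $V(C_n)=\{v_0,\ldots,v_{n-1}\}$ with indices mod $n$, and place $2$ cops on each of $v_0,v_3,\ldots,v_{3(k-1)}$; then, if $r\ge 1$, add one cop at $v_{3k-1}$, and if $r=2$, add one more cop at $v_{3k}$. The total is $2k$, $2k{+}1$, or $2k{+}2$ according to $r$, which is exactly $\lceil 2n/3\rceil$. I will then argue that this configuration captures in one cop-turn from any robber position: every vertex $v_i$ is either already occupied by a cop (instantaneous capture, since cops move first) or is adjacent to a vertex carrying at least two cops, in which case a single pebbling step lands a cop on $v_i$ before the robber moves.

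The only delicate part is the boundary case analysis when $r>0$, which I do not expect to be a serious obstacle. The interior vertices $v_1,\ldots,v_{3k-2}$ are covered by adjacent pairs in the base pattern; the extra singletons at $v_{3k-1}$ (and $v_{3k}$, when $r=2$) cover themselves; and the ``wrap-around'' vertex $v_{n-1}$ is adjacent to $v_0$, which carries a pair and can therefore pebble to it. A brief explicit check of $n\in\{3,4,5\}$ handles the degenerate small cases where $k\le 1$. Assembling the two bounds then yields $\pc(C_n)=\lceil 2n/3\rceil$, completing the theorem.
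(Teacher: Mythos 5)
Your proposal is correct and follows essentially the same route as the paper: the lower bound via $\pc(G)\ge\ps(G)$ and Result \ref{r:Optimal}, the path upper bound via the tree results (the paper cites Theorem \ref{t:treebound} where you cite the tree-equality part of Theorem \ref{t:OptBd}, but these are interchangeable here), and the cycle upper bound via the same ``two cops on every third vertex, singletons on the leftover one or two vertices'' configuration that captures on the first move. Your index bookkeeping for the residues $r\in\{1,2\}$ checks out, so no gap remains.
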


\begin{proof}
Result \ref{r:Optimal} and Theorem \ref{t:OptBd} provide the lower bound for both graphs.
The upper bounds for paths and cycles follow from Theorems \ref{t:treebound} and \ref{t:Cycle}, respectively.
\end{proof}

\begin{thm}
\label{t:TreeOpt}
For any tree $T$ we have $\pc(T) = \ps(T)$.
\end{thm}

\begin{proof}
This follows from Theorems \ref{t:OptBd} and \ref{t:Tree}.
\end{proof}

A simple corollary to Theorem \ref{t:TreeOpt} is that any tree with radius 2 and diameter 4 has cop pebbling number 4.
A complete $k$-ary tree of depth 2 with $k\ge 3$ is such an example.


\section{Cartesian Products}
\label{s:Cart}

For ladders ($P_2\Box P_m$) we have the following theorem.

\begin{thm}
\label{t:ladders}
For all $m\ge 1$ we have $\pc(P_2\Box P_m) = m+1$.
\end{thm}

\begin{proof}
Let $G=P_2\Box P_m$ and for each $i\in [m]$ define the vertex subset $E_i=[2]\times\{i\}$.

For the lower bound, we suppose that $C$ can catch any robber on $G$.
Then we claim that $C$ must be able to move two pebbles to any $E_i$.
Indeed, assume that a cop catches the robber on $E_i$; without loss of generality, on vertex $(0,i)$.
(We shall assume throughout that the robber aims to maximize capture time.)
If the robber didn't move, it is because each of its neighbors contained a cop.
If the robber did move, say from $(1,i)$, then it was because a cop could have moved onto the robber.
In each of those cases we see that two cops could be moved onto $E_i$.
If the robber moved instead from, say, $(0,i-1)$ (or symmetrically $(1,i+1)$), then it was because cops could move into both $(0,i-1)$ and $(1,i-1)$ (or $(0,i+1)$ and $(1,i+1)$).
At this point, the robber can't move (else wouldn't be captured at $(0,i)$), so there must be at least one cop at each of $(1,i)$ and $(0,i+1)$.
The capture at $(0,i)$ implies that at least one of those vertices has two cops; in any case two cops can move to $E_i$.
Now, by collapsing every $E_i$ we obtain the graph $G'=P_m$, with corresponding collapsed configuration $C'$.
The above argument then shows by Result \ref{r:Collapse} that $C'$ is 2-solvable.
Consequently, Result \ref{r:2ReachPath} proves that $|C|=|C'|\ge m+1$.

For the upper bound, we define $[k]=\{0,1,\ldots,k-1\}$, write $m$ uniquely as $m=4r+2s+t$, with $s,t\in [2]$, and let $V(G)=[2]\times [m]$.
Next define the sets $S=\{(0,1)\}$ when $m$ is even and $S=\{(0,1),(m-1,s)\}$ when $m$ is odd, and $T=\{(4i+2j+1,j)\mid 0\le i\le r, j\in [2], 4i+2j+1\in [m]\}$ --- alternately, $T=\{(x,0)\mid x\equiv 1\pmod{m}\}\cup \{(x,1)\mid x\equiv 3\pmod{m}\}$.
Then $S\cup T$ is a dominating set.
Now place two cops on each vertex of $T$ and one cop on each vertex of $S$.
It is simple to check that any robber can be captured in one step and that the number of cops in each case equals $m+1$.
(In fact, this is a Roman dominating set.)
\end{proof}

More general grids have cop pebbling numbers linear in their number of vertices, but there is a gap in the bounds for its coefficient.
We use the following result on the Roman domination number of grids.

\begin{res}
\label{r:RomGrid}
\cite{Curro}
For all all $k, m\ge 5$ we have $\g_R(P_k\Box P_m)\le \lfloor\frac{2(km+k+m)}{5}\rfloor$.
\end{res}

\begin{thm}
\label{t:CopGrids}
For all $5\le k\le m$ we have $\frac{5092}{28593}km+O(k+m) \le \g_R(P_k\Box P_m)\le \lfloor\frac{2(km+k+m)}{5}\rfloor$.
The lower bound also holds for all $1\le k\le m$.
\end{thm}

\begin{proof}
Result \ref{r:OptGridLower} and Theorem \ref{t:OptBd} produce the lower bound, while Result \ref{r:RomGrid} and Theorem \ref{t:doubledom} produce the upper bound.
\end{proof}

Note that this is another example of Roman domination giving a better upper bound than just domination (see Result \ref{r:GridDom}).

For cubes, we can use Results \ref{r:Cube} and \ref{r:CubeCapt} with Theorem \ref{t:capture} to obtain the upper bound $\pc(Q^d)\le \lceil\frac{d+1}{2}\rceil d^{\Th(d)}$.
However, by adding extra cops in the Cops and Robbers game, we can reduce the capture time and therefore also reduce the upper bound for cop pebbling.
Still, an exponential gap remains.

\begin{thm}
\label{t:Cubes}
$\left(\frac{4}{3}\right)^d \le \pc(Q^d)\le \frac{2^{d+1}}{d+1}+o(d)$.
\end{thm}

\begin{proof}
The lower bound follows from Result \ref{r:OptCube} and Theorem \ref{t:OptBd}.
The upper bound follows from Result \ref{r:CubeDom} and Theorem \ref{t:doubledom}.
\end{proof}

\begin{thm}
\label{t:Ktprod}
For every graph $G$ we have $\pc(G\Box K_t) \leq t\pc(G)$. 
\end{thm}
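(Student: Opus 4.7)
The plan is to lift an optimal $G$-configuration into every layer of $G\Box K_t$. I would fix a configuration $C$ on $G$ of size $\pc(G)$ together with a winning cop strategy $\sigma$ for that configuration, and on $G\Box K_t$ place $D(u,i)=C(u)$ for every $u\in V(G)$ and $i\in V(K_t)$, so that $|D|=t\pc(G)$. The ``layer'' $\{(u,i):u\in V(G)\}$ is an isomorphic copy of $G$ inside the product, so each of the $t$ layers carries its own private copy of $C$, and a pebbling step in $G$ along an edge $uw\in E(G)$ can be realized inside layer $i$ as a pebbling step along the product edge $(u,i)(w,i)$.

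Next, I would introduce the projection $\p:V(G\Box K_t)\to V(G)$ defined by $\p(u,i)=u$. If the robber's positions after successive turns are $(v_0,j_0),(v_1,j_1),\ldots$, then $v_0,v_1,\ldots$ is a legal robber trajectory for the game on $G$: a layer move $(v,j)\ra (v',j)$ projects to the $G$-edge $vv'$, while a column move $(v,j)\ra (v,j')$ or a stay projects to a pass at $v$. The cop strategy on $G\Box K_t$ is to run $\sigma$ independently inside each of the $t$ layers, using this common projected trajectory as the virtual $G$-robber for every simulation. Each layer has enough cops to complete its simulation because it starts with a full copy of $C$, and the simulations never compete for resources.

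By the definition of $\sigma$, some turn $\t^*$ must arise at which the $G$-strategy places a cop on $v_{\t^*}$. Translated back to $G\Box K_t$, at turn $\t^*$ every layer $i$ contains a cop at $(v_{\t^*},i)$; in particular, the robber's current layer $j_{\t^*}$ contains a cop at $(v_{\t^*},j_{\t^*})$, which is exactly the robber's present position, so the cops win.

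The main thing I expect to have to verify is that $\sigma$ remains a winning strategy in each layer's simulation even when the virtual $G$-robber is forced to pass many consecutive turns (whenever the $G\Box K_t$-robber shuttles between layers). This is automatic from the definition of $\pc(G)$, since $\sigma$ must defeat every possible robber trajectory in $G$, including arbitrarily lazy ones. The conceptual point is that spending a factor of $t$ extra cops buys $t$ uncoordinated parallel copies of $\sigma$, so that whichever of the $t$ layers the robber happens to inhabit at the capture turn is already guaranteed to have a cop waiting at his location.
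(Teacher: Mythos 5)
Your proof is correct and is essentially the paper's argument: both place a full copy of the optimal $G$-configuration in each of the $t$ layers and have every layer independently chase the projection (the paper's ``copies of the robber'') of the robber onto $G$, so that whichever layer the robber occupies at the capture turn already has a cop at his $G$-coordinate. Your explicit check that the winning $G$-strategy tolerates the projected robber passing (when the real robber moves between layers) is a detail the paper leaves implicit, but it does not change the approach.
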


\begin{proof}
Let $C$ be a configuration of $\pc(G)$ cops on $G$ that can capture any robber.
Define the configuration $C'$ on $G\Box K_t$ by $C'(u,v)=C(u)$ for all $u\in V(G)$ and $v\in V(K_t)$; then $|C'|=t|C|$.
Let $C'_v$ be the restriction of $C'$ to the vertices $V_v=\{(u,v)\mid u\in V(G)\}$.
Then each $C'_v$ is a copy of $C$ on $V_v$.
Now imagine, for any robber on some vertex $(u,v)$, placing a copy of the robber on each vertex $(u',v)$ and maintaining that property with every robber movement.
Then the cops on each $V_v$ will move in unison to catch their copy of the robber in $V_v$, one of which is the real robber.
\end{proof}

A famous conjecture of Graham \cite{Chung} postulates that every pair of graphs $G$ and $H$ satisfy $\p(G\Box H)\le \p(G)\p(H)$.
This relationship was shown by Shiue to hold for optimal pebbling.

\begin{thm}
\label{t:OptProd}
\cite{Shiue}
Every pair of graphs $G$ and $H$ satisfy $\ps(G\Box H)\le \ps(G)\ps(H)$.
\end{thm}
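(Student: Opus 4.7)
The plan is to combine optimal pebbling configurations on $G$ and $H$ multiplicatively and then solve targets in $G\Box H$ coordinate by coordinate. Fix optimal configurations $C_G$ on $G$ and $C_H$ on $H$, with $|C_G|=\ps(G)$ and $|C_H|=\ps(H)$, each solving every target in its respective graph. Define a configuration $C$ on $G\Box H$ by
\[
C(u,v) \;=\; C_G(u)\,C_H(v).
\]
Then $|C|=\bigl(\sum_u C_G(u)\bigr)\bigl(\sum_v C_H(v)\bigr)=\ps(G)\ps(H)$, which is the right total. The goal is to show that $C$ solves any target $(a,b)$.

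First I would prove a scaling lemma: if a configuration $D$ on a graph $K$ solves a target $w$, then for every positive integer $k$, the configuration $kD$ (multiplying every vertex value by $k$) can place $k$ pebbles on $w$. I would establish this by induction on the number of pebbling steps in the witnessing sequence for $D$. If the sequence is empty, then $D(w)\ge 1$, so $kD(w)\ge k$. For the inductive step, the first move of the sequence transfers two pebbles from some $u$ to a neighbor $v$, producing a new configuration $D'$ that still solves $w$ using one fewer step; from $kD$ we instead transfer $2k$ pebbles from $u$ to $v$ via $k$ repetitions of that pebbling step, landing in the configuration $kD'$, to which the induction applies.

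With the scaling lemma, I would carry out the solution of $(a,b)$ in two phases that use only one coordinate of moves at a time. In \emph{phase 1}, for every $v\in V(H)$ with $C_H(v)>0$, I restrict attention to the copy $G\times\{v\}$ of $G$ inside $G\Box H$; on this copy, $C$ agrees with $C_H(v)\cdot C_G$. Since $C_G$ solves $a$ in $G$, the scaling lemma gives a sequence of pebbling moves confined to $G\times\{v\}$ that deposits $C_H(v)$ pebbles on $(a,v)$. Performing these sequences for each $v$ independently (they live in disjoint fibers, so they do not interfere) leaves a configuration on the fiber $\{a\}\times V(H)\cong H$ equal to $C_H$. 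In \emph{phase 2}, I invoke the solution guaranteed by $C_H$ for the target $b$, using only edges within $\{a\}\times V(H)$, to land one pebble on $(a,b)$.

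The only real obstacle is bookkeeping — namely, verifying that phase 1 moves (all along $G$-edges within a fixed $v$-coordinate) and phase 2 moves (all along $H$-edges within the $u=a$ coordinate) never compete for pebbles and that the scaling lemma can be applied in each fiber separately. This separation is immediate from the definition of $G\Box H$, so once the scaling lemma is in hand, the two-phase argument yields the desired inequality $\ps(G\Box H)\le \ps(G)\ps(H)$.
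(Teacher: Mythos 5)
This theorem is stated in the paper as a cited result of Shiue, with no proof given, so there is no in-paper argument to compare against. Your proof is correct and is essentially the standard argument for this result: the product configuration $C(u,v)=C_G(u)C_H(v)$, the scaling lemma (if $D$ solves $w$ then $kD$ places $k$ pebbles on $w$, proved by replaying each pebbling step $k$ times), and the two-phase fiber-by-fiber solution. The only imprecision is cosmetic: after phase 1 the fiber $\{a\}\times V(H)$ carries a configuration that dominates $C_H$ pointwise rather than equaling it (there may be leftover pebbles), but solvability is monotone under adding pebbles, so phase 2 goes through unchanged. Your argument would serve as a self-contained proof of the cited result.
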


One might ask whether or not the analogous relationship holds between $\pc(G\Box H)$ and $\pc(G)\pc(H)$.
Theorem \ref{t:Ktprod} shows that this is true for $H=K_2$.
However, the inequality is false in general, as the following theorem shows.
For any graph $G$ define $G^1=G$ and $G^d=G\Box G^{d-1}$ for $d>1$.

\begin{thm}
\label{t:GrahamCounter}
There exist graphs $G$ and $H$ such that $\pc(G\Box H) > \pc(G)\pc(H)$.
\end{thm}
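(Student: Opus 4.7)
My plan is to exhibit a concrete pair of graphs $(G,H)$ witnessing the strict inequality, rather than argue by a general construction. Two structural constraints guide the search. First, Shiue's Theorem~\ref{t:OptProd} gives $\ps(G\Box H)\le\ps(G)\ps(H)\le\pc(G)\pc(H)$, so any counterexample must have $\pc(G\Box H)>\ps(G\Box H)$; the gap between cop pebbling and optimal pebbling --- invisible for trees and cycles by Theorem~\ref{t:OptBd} --- must genuinely open up in the product. Second, if both factors had a dominating vertex then four cops placed at $(v_G,v_H)$ would reach any product vertex in one turn (everything lies within distance $2$), forcing $\pc(G\Box H)\le 4=\pc(G)\pc(H)$ via Corollary~\ref{c:PcDomVertex}; hence at least one factor has to lack a dominating vertex.

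With those guiding constraints, I would look for a small candidate pair --- natural choices include short paths or small cycles paired with a star, or two small graphs whose $\pc$ values have been pinned down in Subsection~\ref{ss:Exact}. The workflow is then: first, compute $\pc(G)$ and $\pc(H)$ using the theorems from Section~\ref{s:Main} (e.g.\ Theorems~\ref{t:pathcycle}, \ref{t:doubledom}, \ref{t:capture}); next, upper-bound $\pc(G\Box H)$ by an explicit cop configuration and capture strategy; and finally, lower-bound $\pc(G\Box H)$ by proving that no configuration of $\pc(G)\pc(H)$ cops catches every robber.

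The hard part will be the lower bound. For each possible placement of $\pc(G)\pc(H)$ cops on $G\Box H$ --- reduced modulo symmetries of the product --- I must identify an unoccupied vertex $v$ and describe a sequence of free robber moves starting at $v$ that keeps the robber alive for the full $|C|-1$ turns of the game. Because the robber reacts to the cops' moves and the cops lose half their mass with every pebbling step, the robber's strategy can wait for cops to commit to one side of the product and then escape around the other, exploiting precisely the mobility that $\ps$ does not see. I expect the main obstacle to be the case analysis: even after using symmetry, many structurally distinct configurations may need to be examined, and each demands a tailored robber path. The authors likely make this feasible either by selecting $G$ and $H$ with enough symmetry that configurations collapse to a short list, or by performing a computer-assisted check on a small enough example.
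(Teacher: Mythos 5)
Your proposal is a search plan, not a proof: it never names the graphs $G$ and $H$, never computes $\pc(G\Box H)$, and explicitly defers the lower bound --- which is the entire mathematical content of the theorem --- to an unspecified case analysis or computer check. As written there is nothing to verify, so the gap is total. You have also correctly identified why the lower bound is the hard part (one must defeat \emph{every} configuration of $\pc(G)\pc(H)$ cops against a mobile robber), but you offer no mechanism for doing so beyond hoping the example is small and symmetric. A secondary issue: your claim that two dominating-vertex factors force $\pc(G\Box H)\le 4$ is not justified; $(v_G,v_H)$ being a distance-$2$ dominating vertex does not by itself give capture time $2$ against a moving robber, so that ``structural constraint'' needs an argument.

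The paper takes a route that sidesteps precisely the obstacle you flagged. It argues by contradiction: if $\pc(G\Box H)\le\pc(G)\pc(H)$ held for all pairs, then iterating gives $\pc(P_k^d)\le\pc(P_k)^d=\lceil 2k/3\rceil^d$. The robber-versus-cops lower bound you were dreading is replaced by the purely static chain $\pc(P_k^d)\ge\ps(P_k^d)\ge\hps(P_k^d)\ge\hps(C_k^d)$ (Theorem \ref{t:OptBd} and Result \ref{r:fracopt}), where $\hps(C_k^d)=n/m$ is computed exactly from vertex-transitivity via Result \ref{r:trans}. A binomial estimate on $m=\sum_u 2^{-\dist(u,v)}$ shows $(2/3)^d m<1$ for $d$ large relative to $k$, i.e.\ $\lceil 2k/3\rceil^d<\hps(C_k^d)$, contradicting $\pc\ge\ps$. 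Note the conclusion is non-constructive --- it shows \emph{some} pair of iterated path powers must violate the inequality without exhibiting which one --- which is consistent with your difficulty in finding a small explicit counterexample; indeed the paper leaves open (Questions \ref{q:GrahamFam} and \ref{q:GrahamConst}) how badly and for which families the inequality fails. If you want to salvage your approach, you would need either to carry out the lower-bound analysis for a concrete small product in full, or to switch to a lower bound on $\pc$ that does not require tracking the robber, as the paper does with $\ps$ and $\hps$.
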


\begin{proof}
Suppose that $\pc(G\Box H)\le \pc(G)\pc(H)$ for all $G$ and $H$.
For fixed $k\ge 2$, let $d\ge 25k^2$, $v\in V(C_k^d)$, and $m=\sum_{u\in V(C_k^d)}2^{-\dist(u,v)}$.
Then $\sqrt{d}>\ln d$, so that $d/\ln d>\sqrt{d}\ge 5k>\frac{2}{\ln (3/2)}k$, which implies that $d^{2k}<(3/2)^d$.
Also $d\ge \sqrt{k/8}+2$, so that $\sqrt{k/8}\ge d-2$.
Thus
\begin{align*}
    \left(\frac{2}{3}\right)^d\sum_{u\in V(C_k^d)}2^{-\dist(u,v)}\
    &\le\ \left(\frac{2}{3}\right)^d\sum_{i=0}^{kd/2}\binom{i+k-1}{k-1}2^{-i}\ 
    \le\ \left(\frac{2}{3}\right)^d\sum_{i=0}^{kd/2}\binom{i+k-1}{k-1}\
    \le\ \left(\frac{2}{3}\right)^d\binom{kd/2+k}{k}\\
    &\le\ \left(\frac{2}{3}\right)^d(kd/2+k)^k/k!\
    \le\ \left(\frac{2}{3}\right)^d(d+2)^k\sqrt{k/2}^k2^{-k}\
    \le\ \left(\frac{2}{3}\right)^d(d+2)^k(d-2)^k\\
    &\le\ \left(\frac{2}{3}\right)^dd^{2k}\
    <\ 1.
\end{align*}
Therefore we would have
\[\pc(P_k^d)\
\le\ \pc(P_k)^d\
\le\ \left(\frac{2}{3}k\right)^d\
=\ \left(\frac{2}{3}\right)^d n(P_k^d)\ 
<\ n(C_k^d)/m\ 
=\ \hps(C_k^d)\
\le\ \hps(P_k^d)\
\le\ \ps(P_k^d),\]
by Fact \ref{r:fracopt} and Theorem \ref{r:trans}.
This, however, contradicts Theorem \ref{t:OptBd}.
\end{proof}

A more direct example is given by the Cartesian product of wheels.
For $n\ge 4$, define the {\it wheel} $W_n$ by the addition of a dominating vertex $x$ to the cycle $C_{n-1}$, having vertices $v_0, \ldots, v_{n-2}$.

\begin{thm}
\label{t:WheelProds}
Let $n\ge 4$ and let $G=W_n\Box W_n$.
Then $\pc(G)\le 14$ and if $n\ge 67$ then $\pc(G)=14$.
\end{thm}

\begin{proof}
First we show the upper bound.
Note that because $x$ dominates $W_n$, the vertex $(x,x)\in G$ has eccentricity 2.
By placing $14$ cops on $(x,x)$, the robber must choose a vertex outside of $N[(x,x)]$ to start on; by symmetry let it be $(v_1,v_1)$.
Then we move 2 cops to each of $(v_0,x)$, $(v_1,x)$, and $(v_2,x)$, and 1 cop to $(x,v_1)$.
Because of the two cops on $(v_1,x)$ the robber must move to one of its neighbors without cops; by symmetry let it be $(v_2,v_1)$.
Then the cops on $(v_2,x)$ capture the robber.
Hence $\pc(G)\le 14$.

Now we prove the lower bound.
Define the subgraph $H_k$ of $G$ to be induced by the vertices $\{(v_i,w)\mid i\in [k], w\in V(W_n)\} \cup \{(x,v_j)\mid j\in [k]\}$.
Suppose that $n\ge 67$ and $13$ pebble-cops are placed on $G$.
Then there is some subgraph $H$ isomorphic to $H_5$ with no pebble-cops in it.
Symmetrically inside of $H$ is a graph $H'$ isomorphic to $H_3$ (there are three $H_3$s; we're referring to the middle one of them).
Without loss of generality, $H' = H_3$.
Now assume that this configuration catches the robber via a set $\sigma$ of pebbling moves.
Suppose some pebble-cop reaches a vertex $u\in H'$ from a vertex other than $(x,x)$, and let $S$ be the set of pebble-cops used to accomplish that.
Then we can instead start with $S$ on $(x,x)$ and still reach $u$.
Thus, after making all such modifications, we have that the initial configuration of all $13$ pebble-cops on $(x,x)$ catches the robber on $G$.

But we argue as follows that 13 pebble-cops on $(x,x)$ cannot catch a robber at $(v_1,v_1)$ in $G$.
Compute all subscript arithmetic modulo $n-1$.
We call a robber's move from $(v_i,v_j)$ to $(v_{i+1},v_j)$ (respectively $(v_{i-1},v_j)$) {\it increasing} (respectively {\it decreasing}).
First, it takes at least 4 pebble-cops to force the robber to move, because doing so requires 2 pebble-cops to be moved to a neighbor of the robber.
Then, similarly, it takes at least 4 pebble-cops to prevent the robber from making successive increasing moves, and at least 4 more to prevent successive decreasing moves.
Finally it requires at least 2 pebble-cops to block the robber's movement inward, to some vertex $(x,v_j)$.
Note that all these sets of pebble-cops are distinct because no pebble-cop can perform two of the above actions simultaneously.
But this is a contradiction because only 13 pebble-cops exist.
Hence $\pc(G)\ge 14$.
\end{proof}

It seems likely that the value $67$ can be reduced greatly.

\begin{cor}
\label{c:WheelsNotGraham}
For $n\ge 67$ we have $\pc(W_n\Box W_n) = \frac{7}{2}\pc(W_n)\pc(W_n)$.
\end{cor}


\section{Open Questions}
\label{s:Open}



\begin{qst}
\label{q:grid}
Can the bounds $.178 \approx \frac{5092}{28593} \le \lim_{k,m\rightarrow\infty} \pc(P_k\Box P_m)/km \le .4$ from Theorem \ref{t:CopGrids} be improved?
\end{qst}

Corollary \ref{c:WheelsNotGraham} suggests the following two questions.

\begin{qst}
\label{q:GrahamFam}
Is there an infinite family of graphs $\cG$ for which $\pc(G\Box H)\le \pc(G)\pc(H)$ for all $G,H\in\cG$?
\end{qst}

Theorem \ref{t:GrahamCounter} shows that products of paths is not among such a family.

\begin{qst}
\label{q:GrahamConst}
Is there some constant $a\ge 7/2$ such that $\pc(G\Box H)\le a\pc(G)\pc(H)$ for all $G$ and $H$?
\end{qst}

In addition to chordal graphs and Cartesian products discussed above, it would be interesting to study other graph classes.
It was proved in \cite{Clarke} that $c(G)\le 2$ for outerplanar $G$, and in \cite{AignFrom} that $c(G)\le 3$ for planar $G$.
Additionally, it was shown in \cite{PisaTan} that if $G$ is a planar graph on $n$ vertices then $\capt_3(G)\le 2n$.

\begin{qst}
\label{q:PlanarOuter}
Are there constant upper bounds on $\pc(G)$ when $G$ is planar or outerplanar? 
If $k=\pc(G)$ then is $\capt_k(G)$ linear?
\end{qst}

Likewise, a result of \cite{PacSneVox} states that every diameter two graph $G$ on $n$ vertices satisfies $\pi(G)\in\{n,n+1\}$.

\begin{qst}
\label{q:Diam2}
Is there a similar, narrow range of values of $\pc(G)$ over all diameter two graphs $G$?
\end{qst}

Finally, Meyniel \cite{FranklGirth} conjectured in 1985 that every graph $G$ on $n$ vertices satisfies $c(G)=O(\sqrt{n})$.
Some evidence in support of this conjecture is found in \cite{BolKumLea}, where it is proved for $G\in\cG_{n,p}$ that when $0<\e<1$ and $p>2(1+\e)\log(n)/n$ we have $c(G) < \frac{10^3}{\e^3}n^{\frac{1}{2}\log(n)}$ almost surely.
(In fact, they also show that when $p\gg 1/n$ we have $c(G) > \frac{1}{(pn)^2}n^{\frac{1}{2}\left(\frac{\log\log(pn)-9}{\log\log(pn)}\right)}$ almost surely.)
Along these lines, we make the following conjecture.

\begin{cnj}
\label{c:Meyniel}
Every graph $G$ on $n$ vertices satisfies $\pc(G)\le 2n/3+o(n)$; i.e., {\rm\ui}$^\sc(G)\ge n/3-o(n)$.
\end{cnj}

%

\section*{Data Availability Statement}

No data were created or analyzed in this study.


\end{document}